\documentclass[11pt,parskip=half]{scrartcl}
\linespread{1.2}
\usepackage[a4paper,hmargin={2.5cm,2.5cm},vmargin={2.5cm,2.5cm},heightrounded, marginparwidth=2.2cm, marginparsep=0.1cm]{geometry}

\usepackage[utf8]{inputenc}
\usepackage[T1]{fontenc}

\usepackage[leqno]{amsmath}
\usepackage{amssymb,amsthm}
\usepackage{tikz-cd}
\usepackage[all]{xy}

\usepackage{endnotes}  


\theoremstyle{plain}
\newtheorem{theorem}{Theorem}[section]
\newtheorem{lemma}[theorem]{Lemma}
\newtheorem{proposition}[theorem]{Proposition}
\newtheorem{corollary}[theorem]{Corollary}

\theoremstyle{definition}
\newtheorem{definition}[theorem]{Definition}

\theoremstyle{remark}
\newtheorem{remark}[theorem]{Remark}
\newtheorem*{remark*}{Remark}

\RequirePackage[shortlabels]{enumitem}

\newlist{tfae}{enumerate}{1}%
\setlist[tfae,1]{label=(\roman*)}%

\newcommand{\catfont}[1]{\mathsf{#1}}

\newcommand{\SET}{\catfont{Set}}

\DeclareMathAlphabet{\mathmybb}{U}{bbold}{m}{n}

\newcommand{\CA}{\mathcal{A}}
\newcommand{\CB}{\mathcal{B}}
\newcommand{\CC}{\mathcal{C}}
\newcommand{\CD}{\mathcal{D}}
\newcommand{\CE}{\mathcal{E}}

\newcommand{\CS}{\mathcal{S}}

\newcommand{\CV}{\mathcal{V}}

\DeclareMathOperator{\colim}{colim}

\DeclareMathAlphabet{\mathpzc}{OT1}{pzc}{m}{it}

\newcommand{\op}{\mathrm{op}}


\newcommand{\calC}{\mathcal{C}}
\newcommand{\calD}{\mathcal{D}}
\newcommand{\calS}{\mathcal{S}}

\newcommand{\Set}{\SET}

\newcommand{\theory}[1]{\mathcal{#1}}

\newcommand{\inj}{\mathrm{in}}   

\DeclareMathOperator{\Obj}{Obj}


\title{Revisiting Hugo Volger's paper\\
{\em \"{U}ber die Existenz der freien Algebren}}

\author{Mat\'{i}as Menni and Walter Tholen}
\publishers{}
\date{\today}


\usepackage[hypertexnames=false]{hyperref}

\hypersetup{
  colorlinks = true,
  citecolor= [rgb]{0,0.75,0}, 
  urlcolor=[rgb]{0.4,0,0.4}, 
  linkcolor=[rgb]{0.8,0,0} 
}

\begin{document}

\maketitle

\begin{abstract}
We give a modern account of Volger's  1967 paper which, motivated by the construction of free algebras for a Lawvere-Linton theory, gives a very constructive  proof that the left Kan extension of a product-preserving $\SET$-valued functor is product-preserving.  
We also  analyze how it anticipates, and in part even exceeds, subsequent work of the 1970s. 
\end{abstract}

{\em Keywords:} Algebraic theory, free algebra, Kan extension, final functor, weakly cofiltered category.\\
{\em Mathematics Classification:} 18C10, 08B20, 18A40, 18D20.




\section{Introduction}
In the reprint of his celebrated thesis \cite{Lawvere63} (published in part in \cite{Lawvere63a, Lawvere65}), as part of the {\em Author's Comments}, Lawvere writes in reference to Chapter 4 on {\em Algebraic Functors} (see  \cite[Pages 19/20]{Lawvere04}):

\begin{quotation}
 The calculus of algebraic functors and their adjoints is at least as important in practice as the algebraic categories themselves. Thus it is unfortunate that there was no indication of the fundamental fact that these functors preserve reflexive coequalizers (of course their preservation of filtered colimits has always been implicit). As remarked above, “the algebra engendered by a prealgebra” is a special case of an adjoint to a (generalized) algebraic functor. However, contrary to what might be suggested by the treatment in this chapter, the use of that reflection is not a necessary supplement to the use of Kan extensions in proving the general existence of algebraic adjoints: as remarked only later by Michel Andr\'{e}, Jean B\'{e}nabou, Hugo Volger, and others, the special exactness properties of the background category of sets imply that the left Kan extension, along any morphism of algebraic theories, of any algebra in sets, is already itself again product-preserving. Clearly, the same sort of thing holds, for example, with any topos as background. 
\end{quotation}

Especially the mentioning of Volger's name made us curious. Although the paper is not listed in the References of \cite{Lawvere04},
we suppose that Lawvere is alluding to Volger's free-algebra construction in  the paper \cite{Volger68}, which was submitted to {\em Mathematische Zeitschrift} in May of 1967; its German title translates to {\em About the existence of free algebras}. Rather than proving their mere existence (as may easily be done by Adjoint Functor Theorem methods \cite{ML71}), Volger gives a rather concrete construction of free algebras, for theories that allow infinitary operations up to a regular cardinal. Nevertheless, the paper is rarely mentioned anywhere in the subsequent literature pertaining to Lawvere's algebraic theories, including the books or monographs that treat Lawvere theories in a signifcant manner, such as \cite{Pareigis69, Pareigis70} or \cite{PedRov04, ARV11}. Schubert's book \cite{Schubert72} and its German precursor \cite{Schubert70} list Volger's 1967 {\em Diplomarbeit} (Master's thesis) at the University of Freiburg, titled {\em Kategorien von Algebren \"{u}ber algebraischen Theorien} ({\em Categories of algebras over algebraic theories}), on which we presume Volger's paper to be  based (we have no access to his thesis). It is however not apparent to which extent Schubert used Volger's work in his proofs. We found only two articles which directly expand on Volger's paper, namely \cite{Schumacher70} and \cite{HS72}, but like Volger's paper, they hardly get cited in the subsequent literature. (In fact, they did not turn up for us with the search function of {\em Zentralblatt} for papers referencing Volger's paper!) We believe, however, that both, Volger's paper and the Howlett-Schumacher paper \cite{HS72} contain important steps toward Borceux's later work with Day \cite{Borceux76, BorceuxDay77, BorceuxDay80}, which is duly acknowledged in the literature as a pillar of the enriched treatment of algebraic theories and their algebras; see, for example, the often cited paper \cite{KellyLack93}, or the more recent  \cite{KP2012,LucyshynParker24}. In addition, unlike most later works, Volger's construction covers algebras for theories with infinitary operations.

The language barrier and the sometimes cumbersome notation in Volger's paper may have contributed to its limited resonance in the literature and the likely small number of its actual readers. The goal of this article is to make the achievements of Volger's paper easily accessible to a wider audience and to demonstrate their importance.

In Section~\ref{SecWeaklyCofilteredCats} we prove a generalized version of a key technical lemma in Volger's paper. This will help us present his result in a simpler form. 
In Section~\ref{SecVolger} we  present a rather ``liberal'' and shortened translation of Volger's paper in which, while following the original structure of the paper, we use in some instances more modern terminology and simplify some of the original lengthy proofs, thus using methods that have become common place only after the appearance of Volger's paper, especially with the publication of Mac Lane's book \cite{ML71}. In order to preserve the flow of the original text, these deviations from it and our improvements are indicated and explained only in a sequence  of footnotes listed at the end of the paper. 
Then, in Section~\ref{SecComparison}, we briefly compare Volger's result with the presentation of free algebras in Lawvere's thesis, and analyze on one hand to which extent it is, in conjunction with the Howlett-Schumacher paper \cite{HS72}, a true forerunner to Borceux's work with Day, and on the other hand, when applied to $\SET$-based algebras, to which extent it goes beyond these papers since his result includes the consideration of infinitary theories (in the sense of Linton \cite{Linton66}).

The common underlying problem considered in the papers cited above may be formulated as follows:

 {\em Given functors $F:\CB\to \CC$ and $X:\CB\to\CE$ of locally small categories, for $\CB$ small with finite products and $X$ preserving them, and for $\CE$ (small-)cocomplete, which additional conditions are needed to guarantee that then also the left Kan extension $\mathrm{Lan}_FX:\CC\to\CE$ of $X$ along $F$ preserves (any existing) finite products in $\CC$?}
 
The starting point for the work by Borceux and Day  is their elegant short argument that, when $\CE$ is Cartesian closed, no additional condition is needed for an affirmaitive answer to the problem, while Howlett and Schumacher require that $F$ should preserve finite products. Their argumentation adopts parts of Volger's auxiliary proofs which produce an affirmative answer in case $\CE$ is the category $\mathsf{Set}$ and $\CB$ has also pullbacks, in addition to finite products. But in our rendering of Volger's proofs we indicate how the use of the finite-product preservation by $F$ may easily be avoided. Furthermore, we show that the pullback assumption in Volger's paper, which is relevant only when one moves from finite products to products of size below an infinite regular cardinal, may be weakened considerably without losing an affirmative answer of the stated problem in case $\CE=\SET$, but at the expense of some additional restriction on the functor $X$; for details, see Theorem \ref{VolgerTheorem+}.

\section{A generalization of a lemma due to Volger}
\label{SecWeaklyCofilteredCats}

In this section we prove a generalization of the key \cite[Lemma~5]{Volger68}; a result that, today, may be seen as about the calculation of colimits for certain special functors whose categories of elements are weakly cofiltered.

\subsection{Colimits}

We first recall a basic categorical fact. See, for example, \cite[Theorem~{V.2.1}]{ML71}.

 \begin{proposition}[Colimits by coproducts and coequalizers]\label{PropColimits} If the category $\calS$ has small coproducts and coequalizers then, for any small category $\calC$,  every functor ${H : \calC\rightarrow \calS}$ has a colimit and, if we let ${\gamma : H \rightarrow \colim H}$ be the colimiting cocone, then the following diagram  
\[\xymatrix{
\coprod\limits_{f : j \rightarrow k } H j \ar[rr]<+1ex>^-{[\inj_j \mid f ]} \ar[rr]<-1ex>_-{[ \inj_k (H f)\mid  f ]} & & \coprod\limits_{c } H c \ar[rr]^-{[\gamma_c \mid c\in \calC]} && \colim H
}\]
is a coequalizer.
 \end{proposition}

For any category $\calC$, the  relation $\rightsquigarrow_{\calC}$ on the class ${\Obj \calC}$ of objects of $\calC$  is defined by 
\[ x \rightsquigarrow_{\calC} y \]
if, and only, if there is a map ${x \rightarrow y}$ in $\calC$. This relation is clearly reflexive and transitive, but not necessarily symmetric.

For any small category $\calC$ and any functor ${H : \calC \rightarrow \Set}$ let ${1/H}$ be the category `of elements' of $H$.
Its objects are pairs ${(c, x)}$ with $c$ and object in $\calC$ and ${x \in H c}$.
A map ${f : (j, x) \rightarrow (k,y)}$  in $1/H$ is a map ${f : j \rightarrow k}$ in $\calC$ such that ${(H f) x = y}$.

The set ${\Obj(1/H)}$ may be identified with 
 \[ \coprod_{c \in \calC}  H c \]
 so we easily obtain the following.
 
 \begin{corollary}\label{CorForLem2.2}  If $\calC$ is small then the colimit of a functor ${H : \calC \rightarrow \Set}$ may be calculated as the quotient of the set ${\Obj(1/H)}$ by the equivalence relation generated by  $\rightsquigarrow_{1/H}$.
 \end{corollary}
 \begin{proof}
 Immediate from Proposition~\ref{PropColimits}.
 \end{proof}

\subsection{The relation $\Lambda$}

For any category $\calC$, the  relation ${\Lambda_\calC}$  on the class ${\Obj \calC}$ of objects of $\calC$  is defined by 
\[ x \Lambda_{\calC} y\]
if, and only, if  there is a span ${x \leftarrow  c \rightarrow y}$ of maps in $\calC$.
This relation is reflexive and symmetric, but not necessarily transitive.

\begin{lemma}\label{LemTheSameEquivRel} 
For any category $\calC$, the relations ${\rightsquigarrow}$ and $\Lambda_\calC$ generate the same equivalence relation.
\end{lemma}
\begin{proof}
To lighten the notation we do not write the category as a subindex.
On the one hand, if the map ${f : x \rightarrow y}$ in $\calC$ witnesses that  ${x \rightsquigarrow  y }$, then the span 
 \[\xymatrix{ x & \ar[l]_-{1_x} x \ar[r]^-f & y }\]
 witnesses that ${x \Lambda y}$.  On the other hand, if ${x \Lambda y}$ is witnessed by a span ${x \leftarrow c \rightarrow y}$ in $\calC$ then  ${c \rightsquigarrow x}$ and ${c \rightsquigarrow y}$.
\end{proof}

 At the risk of being obvious, we remark that for a functor  ${H : \calC \rightarrow \Set}$,
 \[  (c, x) \Lambda_{1/H} (d, y) \]
 if, and only if, there is a span 
 \[ \xymatrix{(c, x)  & \ar[l]_-{f} (e, z) \ar[r]^-g & (d, y) } \]
 in ${1/H}$.  That is, if there is a span ${\xymatrix{c & \ar[l]_-{f} e \ar[r]^-g & d }}$ and an element ${z \in H e}$ such that ${(H f) z = x}$ and ${(H g) z = y}$.

\begin{lemma}\label{Lem2.2} 
 If $\calC$ is small then the colimit of a functor ${H : \calC \rightarrow \Set}$ may be calculated as the quotient of the set ${\Obj(1/H)}$ by the equivalence relation generated by  $\Lambda_{1/H}$.
\end{lemma}
 \begin{proof}
 Follows from Corollary~\ref{CorForLem2.2} and  Lemma~\ref{LemTheSameEquivRel}.
 \end{proof}

\subsection{Weakly cofiltered categories}

In this section we prove a generalization of a technical result in \cite{Volger68} which apperas in our freely edited translation as Lemma~\ref{Lemma5}.

\begin{definition}\label{DefWeaklyCofiltered}
A category is {\em weakly cofiltered} if every cospan may be completed to a commutative square.
\end{definition}

\begin{remark}\label{RemOnTransitivity} If $\calC$ is weakly cofiltered then $\Lambda_\calC$ is transitive and hence, an equivalence relation.
\end{remark}

In order to stay in harmony with Volger’s notation \cite{Volger68}, for a functor  ${F : \theory{B} \rightarrow \calC}$ we let
\[ \xymatrix{ F/c \ar[rr]^-{V_{F,c}} & & \theory{B} }\]
denote the `forgetful' functor.

\begin{proposition}\label{PropLem3.2} 
Let ${G :  \theory{B} \rightarrow \calD}$ be any functor with $\calD$ locally small.
For any object  $d$ in $\calD$, the following are equivalent:
\begin{enumerate}
\item The category ${d/G}$ is weakly cofiltered.
\item For every  ${F : \theory{B} \rightarrow \calC}$ to a locally small category, and every object $c$ in $\calC$, the category of elements of the composite
\[ \xymatrix{
F/c \ar[r]^-{V_{F,c}} & \theory{B} \ar[r]^-G & \calD \ar[rr]^-{\calD(d,-)} && \Set
}\]
is weakly cofiltered.
\end{enumerate}
\end{proposition}
\begin{proof}
The second item implies the first by taking $F$ to be the unique functor from $\theory{B}$ to the terminal category.
To prove the converse let  ${P_{c, d} : F/c \rightarrow \Set}$ be the composite functor  in the statement.
The category  ${(F/c)/P_{c,d}}$ may be described as follows. Its objects are triples ${(b, u, v)}$ with ${u : F b \rightarrow c}$ in $\calC$, and ${v : d \rightarrow G b}$ in $\calD$. 
A map ${h : (b_0, u_0, v_0) \rightarrow (b_1, u_1, v_1)}$  is a map ${h : b_0 \rightarrow b_1}$ in $\theory{B}$ such that ${u_1 (F h) = u_0}$    and  ${(G h) v_0 = v_1}$.
Equivalently, such that  ${h : (b_0, u_0) \rightarrow (b_1, u_1)}$ in $F/c$ and  ${h : (b_0, v_0) \rightarrow (b_1, v_1)}$ in ${d/G}$.

To prove that ${(F/c)/P_{c,d}}$ is weakly cofiltered let  ${h_0 : (b_0, u_0, v_0) \rightarrow (b, u, v)}$ and ${h_1 : (b_1, u_1, v_1) \rightarrow (b, u, v)}$.
Since ${d/G}$ is weakly cofiltered by hypothesis, there is a commutative square
\[\xymatrix{
(b', v') \ar[d]_-{k_0} \ar[r]^-{k_1} & (b_1, v_1) \ar[d]^-{h_1} \\
(b_0, v_0) \ar[r]_-{h_0} & (b, v)
}\]
in ${d/G}$. Now observe that the following diagram commutes
\[\xymatrix{
 & \ar@(l,u)[ld]_-{F k_0} F b' \ar@(r,u)[rd]^-{F k_1} \\
F b_0 \ar@(d,l)[rd]_-{u_0} \ar[r]^-{F h_0} & F b \ar[d]^-{u} & \ar[l]_-{F h_1}  \ar@(d,r)[ld]^-{u_1}  F b_1 \\
 & c &
}\]
and let ${u' : F b' \rightarrow c}$ be the resulting map.
We then have the object ${(b', u', v')}$ in ${(F/c)/P_{c,d}}$ and, from the same diagram we infer that
 ${k_0 : (b', u') \rightarrow (b_0, u_0)}$ and ${k_1 : (b', u') \rightarrow (b_1, u_1)}$  in $F/c$.
\end{proof}

We next discuss a simple sufficient condition for the first item of Proposition~\ref{PropLem3.2} to hold.
 
 \begin{definition}\label{DefWPC} 
 A functor  $G:\CB\to\CD$  satisfies the {\em Weak Pullback Condition (WPC)} if every cospan $\xymatrix{\cdot\ar[r]^f &\cdot &\cdot\ar[l]_g}$  in   $\CB$ may be completed to a commutative diagram 
$$\xymatrix{& \cdot\ar[ld]_p\ar[rd]^q &\\
\cdot\ar[rd]_f && \cdot\ar[ld]^g\\
& \cdot \\
}$$
that $G$ transforms into a weak pullback diagram  in $\CD$.
 \end{definition}

We note that $G$ trivially satisfies WPC when $\CB$ has pullbacks and $G$ preserves them weakly, {\em i.e.}, transforms them into weak pullback diagrams.

\begin{lemma}\label{LemWPC} If  $G:\CB\to\CD$ satisfies the WPC then ${d/\CD}$ is weakly cofiltered for all $d\in\CD$.
\end{lemma}
\begin{proof}
Easy.
\end{proof}

 For a span of functors
$$\xymatrix{\CC & \CB\ar[l]_-{F}\ar[r]^-{G} & \CD}$$
 and  specified objects $c$ and $d$ in the locally small categories $\CC$ and $\CD$, the composite
 \[ \xymatrix{F/c\ar[rr]^-{V_{F,c}} && \CB\ar[rr]^-G && \CD\ar[rr]^-{\CD(d,-) } && \SET} \]
 will be denoted by 
 \[  P_{c,d} : F/c \rightarrow  \SET \]
 as in the proof of Proposition~\ref{PropLem3.2}.

\begin{lemma}\label{Lemma5WPCnew}
 Let $\CB$ be a small category, and consider a span of functors
\[\xymatrix{c\in \CC & \CB\ar[l]_-{F}\ar[r]^-{G} & \CD\ni d} \]
with specified objects $c$ and $d$ in the locally small categories $\CC$ and $\CD$, respectively.
If $G$ satisfies the WPC then the relation ${\Lambda_{c,d} = \Lambda_{1/P_{c, d}}}$ is an equivalence relation.
Hence, 
\[ \mathrm{colim}\,P_{c,d}\cong \left(\coprod_{b\in \CB}\CD(d,Gb)\times \CC(Fb,c)\right)/ \Lambda_{c,d} \,.\]
\end{lemma}
\begin{proof}
Lemma~\ref{LemWPC} implies that the category $d/\CD$ is weakly cofiltered for every ${d \in \CD}$.
So the category $1/P_{c,d}$  of elements of $P_{c,d}$ is weakly cofiltered by Proposition~\ref{PropLem3.2}.
Then $\Lambda_{c,d}$ is an equivalence relation by Remark~\ref{RemOnTransitivity}, and the colimit formula holds by Lemma~\ref{Lem2.2}.
\end{proof}

\section{Our reading of Volger's article -- a freely edited translation}
\label{SecVolger}

In this article, like for Lawvere  \cite{Lawvere63} and Linton \cite{Linton66},  {\em algebras} are functors with values in $\SET$, the category of all sets, defined on certain small categories, called algebraic theories. Differently from Lawvere \cite{Lawvere63}, the classes of algebras may have uncountable presentation rank, but differently from Linton \cite{Linton66} a presentation rank has to exist.

Let $r$ be an (infinite) regular cardinal and denote by $\CS_r$ a skeleton of the category of sets of cardinality $<r$, considered as a full subcategory of the category $\SET$ of all sets and functions. An \textit{algebraic theory (of presentation rank\endnote{The term ``presentation rank'' used here is called ``dimension'' by Volger.} $r$)} is a small category $\CA$ which comes with a functor $A:\CS_r^\op\to\CA$ that is bijective on objects and preserves products\endnote{Volger works with a bijective-on-objects and coproduct-preserving functor $A:\CS_r\to\CA$, so his notation of an algebraic theory is formally dual to the one used more frequently, as adopted also here.}. We write $A^k$ for $A(k)$ for all $k<r$ and denote\endnote{Volger uses Lawvere's notation $\SET^{(\CA)}$ for $\mathsf{Alg}(\CA)$.}
by
$$\mathsf{Alg}(\CA)$$
the full subcategory of the functor category $\SET^\CA$ containing all product-preserving functors $X:\CA\to\SET$. In generalization of an idea of Andr\'{e} for the construction of free algebras in case $r=\aleph_0$, the goal is to establish the left adjoint to the (forgetful) evaluation functor 
$$U:\mathsf{Alg}(\CA)\longrightarrow\SET,\quad X\longmapsto X(A^1)\,.$$

Note\endnote{This paragraph includes snippets appearing only later in Volger's article, in the proof of the main theorem.} that, for the trivial algebraic theory $\CS_r^\op$ (equipped with its identity functor), the forgetful functor $\mathsf{Alg}(\CS_r^\op)\to \SET$ is an equivalence of categories. Its (left) adjoint interprets a set $X$ as the $\mathsf{Alg}(\CS_r^\op)$-object given by its restricted hom 
$$X^{(-)}=\SET(I^{\op}(-),X): \CS_r^\op\longrightarrow\SET,\quad m\longmapsto X^m,$$
 where $I$ is the inclusion functor $\CS_r\hookrightarrow\SET$. The strategy is to describe the desired adjunction
$$\xymatrix{\mathsf{Alg}(\CA)\ar@/^0.6pc/[rr]^{U\quad\;\;} &\top& \SET\simeq \mathsf{Alg}(\CS_r^\op)\ar@/^.4pc/[ll]
}$$
as a restriction of the adjunction
$$\xymatrix{\SET^{\CA}\ar@/^.6pc/[rr]^{(-)A} &\top& \SET^{\CS_r^\op}\;.\ar@/^.4pc/[ll]^{\mathrm{Lan}_A}
}$$
The task is then to show that the left Kan extension of $X^{{\scriptscriptstyle (-)}}:\CS_r^\op\to\SET$ along $A:\CS_r^\op\to\CA$ preserves products\endnote{We use here Mac Lane's terminology and notation for left Kan extensions, established later than Volger's paper.}. This is achieved with the help of five lemmata.

First some notation: for a functor $F:\CB\to\CC$ and an object $c\in\CC$, let $F/c$ denote the comma category of objects $(b,z)$ with $z:Fb\to c$ in $\CC$ and morphisms $f:(b,z)\to(b',z')$ with $f:b\to b'$ in $\CB$ and $z'\cdot Ff=z$ in $\CC$. 


\begin{lemma}\label{Lemma1}{{\em (Lawvere \cite{Lawvere63}, I.2.5)\endnote{Other than Lawvere's thesis, for this  (at the time not yet widely known) lemma, Volger cites also Ulmer \cite{Ulmer66}, presumably a precursor to \cite{Ulmer68}. 
}}}
For a functor $F:\CB\to\CC$ with $\CB$  small, $\CC$ locally small, and $\CE$ small-cocomplete, the functor $(-)F:\CE^\CC\to\CE^\CB$ has a left adjoint, $\mathrm{Lan}_F$, which assigns to $X\in\CE^\CB$ the functor 
$$\mathrm{Lan}_FX:\CC\longrightarrow\CE,\quad c\longmapsto \mathrm{colim}\,XV_{F,c}\;.$$
\end{lemma}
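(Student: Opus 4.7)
The plan is to verify the stated pointwise formula directly: I would show that the displayed colimits exist, assemble them into a functor $\mathrm{Lan}_F X : \CC \to \CE$, and then establish the universal property that exhibits $\mathrm{Lan}_F$ as left adjoint to $(-)F$.

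First I would note that because $\CB$ is small and $\CC$ locally small, each comma category $F/c$ is small, so $\colim\,XV_{F,c}$ exists in the small-cocomplete category $\CE$; write $\lambda^c_{(b,z)} : X(b) \to (\mathrm{Lan}_F X)(c)$ for its universal cocone. Functoriality in $c$ comes for free: every $g:c\to c'$ induces a functor $g_* : F/c \to F/c'$, $(b,z)\mapsto(b, g\cdot z)$, satisfying $V_{F,c'}\circ g_* = V_{F,c}$, and the cocone $\lambda^{c'} \circ g_*$ on $XV_{F,c}$ factors uniquely through $\lambda^c$ to yield $(\mathrm{Lan}_F X)(g)$; uniqueness forces $\mathrm{Lan}_F X$ to respect composition and identities. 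A parallel argument, applied to a natural transformation $X \to X'$ in $\CE^\CB$, extends the assignment to a functor $\mathrm{Lan}_F : \CE^\CB \to \CE^\CC$.

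Next I would exhibit the adjunction by constructing the unit and checking its universal property. Define $\eta_X : X \to (\mathrm{Lan}_F X) F$ at $b$ to be $\lambda^{Fb}_{(b,\mathrm{id}_{Fb})}$, with naturality a direct consequence of the construction of $(\mathrm{Lan}_F X)(Ff)$. Then, given $Y \in \CE^\CC$ and $\beta : X \to YF$, the family $\bigl(Y(z)\circ\beta_b\bigr)_{(b,z)\in F/c}$ forms a cocone on $XV_{F,c}$ — naturality of $\beta$ yields, for any $f:(b,z)\to(b',z')$ in $F/c$, the identity $Y(z')\circ\beta_{b'}\circ X(f) = Y(z')\circ Y(Ff)\circ\beta_b = Y(z)\circ\beta_b$ — so there is a unique $\overline\beta_c : (\mathrm{Lan}_F X)(c) \to Y(c)$ induced by it. Uniqueness again forces naturality of $\overline\beta$ in $c$ and delivers the required bijection $\CE^\CC(\mathrm{Lan}_F X, Y) \cong \CE^\CB(X, YF)$, natural in $X$ and $Y$.

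The only real obstacle is the bookkeeping involved in the several naturality checks — naturality of $(\mathrm{Lan}_F X)(g)$ in $c$, of $\overline\beta_c$ in $c$, and of the adjunction isomorphism in $X$ and $Y$. Each reduces mechanically to the uniqueness clause in the universal property of $\colim\,XV_{F,c}$, so no new idea beyond what has already been set up is required.
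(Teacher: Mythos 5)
Your argument is correct and complete: it is the standard pointwise construction of the left Kan extension via the comma categories $F/c$, with the unit given by the colimit injections at $(b,\mathrm{id}_{Fb})$ and the universal property reduced to uniqueness of maps out of colimits. The paper itself gives no proof of this lemma (it is cited from Lawvere's thesis, I.2.5), and your proof is exactly the classical argument that citation refers to, so there is nothing to add.
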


Also the second\endnote{Volger gives the explicit colimit formula, not referring to the ``element category''.} and third\endnote{We nevertheless include here a brief sketch of the proof.} Lemma are stated without proof.
\begin{lemma}\label{Lemma2}
For $\CB$ small, the colimit of a functor $P:\CB\to\SET$ may be given by the connected components of the element category $1/P$ of $P$; explicitly,
$$\mathrm{colim}\,P=\left(\coprod_{b\in\CB}Pb\right)/\!\sim\;,$$
where $\sim$ is the transitive hull of the relation $R$ on $\coprod_{b\in\CB}Pb$ defined by
$$(b,x) \,R\,(b',x')\iff \exists\;(\xymatrix{(b,x) & (\overline{b},\overline{x})\ar[l]_{\;f}\ar[r]^{g\;}& (b',x')}) \text{ in } 1/P\,.$$
\end{lemma}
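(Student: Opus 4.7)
The plan is to exhibit the set $Q := \left(\coprod_{b\in\CB}Pb\right)/\!\sim$ together with the obvious inclusions as the colimit cocone over $P$, and then verify the universal property directly. I would first observe that the relation $R$ is reflexive (take both legs of the span to be identities) and symmetric (swap the legs), so that $\sim$ coincides with the equivalence relation of being in the same connected component of $1/P$; indeed, any zig-zag in $1/P$ connecting $(b,x)$ to $(b',x')$ factors into a finite sequence of spans, after inserting identities at the appropriate places. This preliminary observation is essentially book-keeping.

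Next I would define the candidate cocone: for each $b\in\CB$, let $\iota_b:Pb\to Q$ send $x$ to the class $[b,x]$. Naturality amounts to checking that for any $f:b\to b'$ in $\CB$ and any $x\in Pb$ one has $[b,x]=[b',Pf(x)]$, which is witnessed by the span $(b,x)\xleftarrow{\mathrm{id}}(b,x)\xrightarrow{f}(b',Pf(x))$ in $1/P$.

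For universality, let $(\alpha_b:Pb\to Y)_{b\in\CB}$ be any cocone over $P$. The coproduct induces $\alpha:\coprod_b Pb\to Y$ with $\alpha(b,x)=\alpha_b(x)$. I would show $\alpha$ is constant on $R$-classes: if $(\overline b,\overline x)\in 1/P$ maps to both $(b,x)$ and $(b',x')$ via $f$ and $g$, then the cocone condition gives $\alpha_b(x)=\alpha_b(Pf(\overline x))=\alpha_{\overline b}(\overline x)=\alpha_{b'}(Pg(\overline x))=\alpha_{b'}(x')$. Transitivity of equality then extends this to $\sim$, yielding a unique factorization $\overline\alpha:Q\to Y$ with $\overline\alpha\circ\iota_b=\alpha_b$ for all $b$. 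Uniqueness of $\overline\alpha$ is forced because every element of $Q$ is of the form $[b,x]=\iota_b(x)$.

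I do not expect any serious obstacle here; the only subtlety is the explicit verification that the span-based relation $R$ generates the same equivalence as arbitrary zig-zags in $1/P$, which is why the transitive (rather than the full equivalence) hull suffices in the statement.
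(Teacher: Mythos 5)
Your proof is correct: the reduction of arbitrary zig-zags in $1/P$ to chains of spans (via inserting identities and using symmetry of $R$), the naturality of the insertions $\iota_b$, and the factorization of an arbitrary cocone through $Q$ are all verified soundly. The paper states this lemma without proof (it is the standard description of $\mathrm{colim}\,P$ as $\pi_0$ of the category of elements), so there is nothing to compare against; your direct verification of the universal property is exactly the argument one would supply.
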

\begin{proof}
(Notice that this is Lemma~\ref{Lem2.2}.)
\end{proof}


\begin{lemma}\label{Lemma3}
For $\CB_i$ small, assume that the relation $R_i$ defined by the functor $P_i:\CB_i\to \SET$ as in Lemma {\em \ref{Lemma2}} is transitive, $i=1,2$\endnote{Later in his paper, Volger mentions, and uses, the fact that the indexing system $\{1,2\}$ may be replaced by any set (including the empty set: see the first footnote to Lemma \ref{Lemma4}).}. Then,
 for the functor\endnote{We keep here Volger's notation, although this is not to be read as a ``tensor product''.}
 $$P_1\otimes P_2=( \xymatrix{\CB_1\times\CB_2\ar[rr]^{P_1\times P_2} &&\SET\times\SET\ar[r]^{\quad\times}&\SET}),\quad (b_1,b_2)\mapsto P_1b_1\times P_2b_2, $$
  the canonical map 
$$k:\mathrm{colim}( P_1\otimes P_2)\longrightarrow \mathrm{colim}\,P_1\times\mathrm{colim}\,P_2$$
is bijective.
\end{lemma}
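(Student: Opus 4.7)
The plan is to exploit the hypothesis that each relation $R_i$ is transitive, which — together with the identity spans $(b,x)\xleftarrow{\mathrm{id}_b}(b,x)\xrightarrow{\mathrm{id}_b}(b,x)$ showing reflexivity — forces the equivalence $\sim_i$ on $\coprod_{b\in\CB_i} P_i b$ to coincide with $R_i$ itself. Via Lemma \ref{Lemma2}, the two colimits then admit the explicit descriptions
$$\colim P_i = \Bigl(\coprod_{b\in\CB_i} P_i b\Bigr)\!\big/R_i, \qquad \colim(P_1\otimes P_2) = \Bigl(\coprod_{(b_1,b_2)} P_1 b_1\times P_2 b_2\Bigr)\!\big/\!\sim,$$
where $\sim$ is the transitive closure of the span-relation $R$ on the coproduct indexed by $\CB_1\times\CB_2$.

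The map $k$, induced by the two projections of $P_1\otimes P_2$ onto the factors, sends the class of $((b_1,b_2),(x_1,x_2))$ to the pair $\bigl([(b_1,x_1)]_{R_1},[(b_2,x_2)]_{R_2}\bigr)$. Surjectivity is immediate: the pair $([(b_1,x_1)]_{R_1},[(b_2,x_2)]_{R_2})$ is hit by the class of $((b_1,b_2),(x_1,x_2))$.

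For injectivity, suppose $k$ identifies the classes of $((b_1,b_2),(x_1,x_2))$ and $((b_1',b_2'),(x_1',x_2'))$. Then $(b_i,x_i)\,R_i\,(b_i',x_i')$ for $i=1,2$, so I pick witnesses $\bar b_i\in\CB_i$, $\bar x_i\in P_i\bar b_i$, and morphisms $f_i:\bar b_i\to b_i$, $g_i:\bar b_i\to b_i'$ with $P_i f_i(\bar x_i)=x_i$ and $P_i g_i(\bar x_i)=x_i'$. Pairing these coordinatewise produces a single span
$$(b_1,b_2)\xleftarrow{(f_1,f_2)}(\bar b_1,\bar b_2)\xrightarrow{(g_1,g_2)}(b_1',b_2')$$
in $\CB_1\times\CB_2$ whose element $(\bar x_1,\bar x_2)\in P_1\bar b_1\times P_2\bar b_2$ maps to $(x_1,x_2)$ and $(x_1',x_2')$ respectively, witnessing the relation $R$ and hence the desired identification in $\colim(P_1\otimes P_2)$.

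The heart of the argument is precisely the transitivity assumption, and this is the only potential obstacle. Without it, elements identified in $\colim P_i$ would only be connected by zigzags of spans whose lengths could differ between the two indices, and two such zigzags could not be stitched together coordinatewise into a single zigzag in the product category $\CB_1\times\CB_2$. Transitivity collapses each zigzag into a single span, after which products of spans deliver the coordinatewise assembly needed to invert $k$.
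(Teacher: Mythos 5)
Your proof is correct and follows essentially the same route as the paper's: both arguments rest on the observation that, since transitivity (together with the automatic reflexivity and symmetry) makes each $\sim_i$ coincide with $R_i$, a pair of single spans witnessing $R_1$ and $R_2$ can be paired coordinatewise into a single span in $\CB_1\times\CB_2$, so that the relation induced by $P_1\otimes P_2$ is exactly the product of the $R_i$ and the quotient of the product is the product of the quotients. The paper states this as ``$R$ is an equivalence relation as well,'' whereas you unpack it into explicit surjectivity and injectivity of $k$; the content is the same.
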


\begin{proof}
Note that the assumed transitivity makes $R_i$ an equivalence relation on $\coprod_{b\in\CB_i}P_ib$ for $i=1,2$. The relation $R$ induced by $P_1\otimes P_2$ on $\coprod_{(b_1,b_2)}P_1b_1\times P_2b_2$ clearly satisfies
$$((b_1,b_2),(x_1,x_2))\;R\;((b_1',b_2'),(x_1',x_2'))\iff (b_1,x_1)\,R_1\,(b_1',x_1') \text{ and } (b_2,x_2)\,R_2\,(b_2',x_2').$$
Therefore, $R$ is an equivalence relation as well, and the canonical map $k$ is bijective.
\end{proof}

The following lemma\endnote{Only the colimit formula in the second part of the formulation given here appears in Volger's article, which he proves without explicit recourse to the functor $W$, using however the same ingredients arising from the products in $\CB$ as used in the proof given here. With the proof of the implicitly stated colimit invariance of $W$, Volger's proof of the Lemma takes $3\frac{1}{2}$ pages of his $8\frac{1}{2}$-page paper.} originates with an idea of Andr\'{e}:

\begin{lemma}\label{Lemma4}
Let the small category $\CB$ have binary\endnote{Volger remarks later that, as in Lemma \ref{Lemma3}, one may work with $I$-fold products for any set $I$, rather than just with binary ones. He mentions that the case $I=\emptyset$ requires separate considerations. But when read in the form given here, his proof works also in that case: replacing $c_1\times c_2$ by the empty product $1$ in $\CC$, one has $F/1\cong \CB$ and then, up to isomorphism,  $W$ becomes the functor $\mathbf 1\to\CB$ that picks the terminal object in $\CB$. As a right adjoint, W is final: see Lemma~\ref{HSLemma4}.} products, and let the functor $F:\CB\to\CC$ preserve\endnote{The preservation hypothesis on $F$ may be avoided; see Lemma \ref{HSLemma4}.} them.  Then for all objects $c_1,c_2$ with a product in the locally small category $\CC$ there is a final\endnote{``Final'' is often, but misleadingly, called ``cofinal", apparently as a questionable contraction of the German term {\em konfinal},  as used by Gabriel and Ulmer in Definition 2.12. of their \cite{GU71}.} functor $W$ making the diagram
$$\xymatrix{F/c_1\times F/c_2\ar[rr]^W\ar[rd]_{V_{F,c_1}\otimes V_{F,c_2}} && F/(c_1\times c_2)\ar[ld]^{V_{F,c_1\times c_2}}\\
& \CB &\\
}$$
commute. Therefore, for any functor $X:\CB\to\CE$ one has
$$ \mathrm{colim}(XV_{F,c_1\times c_2})\cong\mathrm{colim}(X(V_{F,c_1}\otimes V_{F,c_2}))\,,$$
existence of these colimits in $\CE$ granted.
\end{lemma}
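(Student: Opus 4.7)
The plan is to define $W$ on objects by $W((b_1, z_1), (b_2, z_2)) := (b_1 \times b_2, \hat z)$, where $\hat z : F(b_1 \times b_2) \to c_1 \times c_2$ is the unique morphism whose composites with the product projections $p_i : c_1 \times c_2 \to c_i$ in $\CC$ are $z_i \circ F\pi_i^\CB$ (with $\pi_i^\CB : b_1 \times b_2 \to b_i$ the product projections in $\CB$); on morphisms, $W$ sends $(f_1, f_2)$ to $f_1 \times f_2$. Functoriality is immediate, and by construction $V_{F, c_1 \times c_2} \circ W = V_{F, c_1} \otimes V_{F, c_2}$, so the prescribed triangle commutes.

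The heart of the argument is to verify that $W$ is final, i.e.\ that for each $(b, z) \in F/(c_1 \times c_2)$ the comma category $(b, z)/W$ is nonempty and connected. Setting $z^i := p_i \circ z$, the natural candidate is the ``diagonal'' triple $((b, z^1), (b, z^2); \Delta_b)$, where $\Delta_b : b \to b \times b$ is the diagonal in $\CB$; a short diagram chase using $\pi_i^\CB \circ \Delta_b = \mathrm{id}_b$ and the universal property of $c_1 \times c_2$ gives $\hat z \circ F\Delta_b = z$, exhibiting this as a bona fide object of $(b, z)/W$ and thereby establishing nonemptiness.

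For connectedness, given any other object $((b_1, z_1), (b_2, z_2); f)$ with $f : b \to b_1 \times b_2$, set $g_i := \pi_i^\CB \circ f$. Post-composing the identity $\hat z \circ Ff = z$ with $p_i$, and using only functoriality of $F$, yields $z_i \circ F g_i = z^i$, so each $g_i$ is a morphism $(b, z^i) \to (b_i, z_i)$ in $F/c_i$. The pair $(g_1, g_2)$ then defines a morphism in $F/c_1 \times F/c_2$ from the diagonal object to $((b_1, z_1), (b_2, z_2))$, and the identity $(g_1 \times g_2) \circ \Delta_b = \langle g_1, g_2 \rangle = f$ shows that it lifts to a morphism of $(b, z)/W$. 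Hence every object of $(b, z)/W$ admits a morphism from the diagonal object, which is more than enough for connectedness.

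Once finality is in hand, the claimed colimit isomorphism follows at once from the standard theorem that final functors preserve colimits, applied to $X V_{F, c_1 \times c_2} \circ W = X (V_{F, c_1} \otimes V_{F, c_2})$. The main work is the identification of the diagonal object as a universal source of morphisms in $(b, z)/W$ — Andr\'{e}'s trick that drives the whole argument — after which everything reduces to unfolding the universal property of products in $\CC$ and $\CB$. It is worth remarking that, with $W$ and $\hat z$ defined as above, the preservation of products by $F$ is never actually invoked, in line with the footnote's hint that the hypothesis may be removed.
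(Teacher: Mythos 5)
Your proof is correct and follows essentially the same route as the paper's: the key step is identifying the ``diagonal'' object $(((b,\pi_1\cdot z),(b,\pi_2\cdot z)),\Delta_b)$ as weakly initial in $(b,z)/W$, exactly as in the printed argument. The only deviation is that you define the structure map of $W((b_1,z_1),(b_2,z_2))$ directly by the universal property of $c_1\times c_2$ rather than via the canonical isomorphism $F(b_1\times b_2)\cong Fb_1\times Fb_2$, which indeed makes the product-preservation of $F$ superfluous --- precisely the modification the paper itself records in Lemma~\ref{HSLemma4}.
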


\begin{proof}
Let $W$ map the object $((b_1, z_1),(b_2, z_2))$  in $F/c_1\times F/c_2$ to $(b_1\times b_2, (z_1\times z_2)\cdot i_{b_1,b_2})$ in $F/(c_1\times c_2)$, with the canonical isomorphism $i_{b_1,b_2}:F(b_1\times b_2)\to Fb_1\times Fb_2$, and we extend this assignment canonically to morphisms. To confirm finality of $W$ we must show that the category $(b,z)/W$ is connected, for every object $(b,z)$ in $F/(c_1\times c_2)$. In fact, we show that $(b,z)/W$ has a weakly initial object, as follows.

For the given object $(b,z)$ with $z: Fb\to c_1\times c_2$ in $\CC$, consider the left-hand side of the following diagram, with product projections $\pi_i$ of $c_1\times c_2$ (i=1,2), and with diagonal morphisms $\Delta_b$ and $\Delta_{Fb}$; it constitutes a morphism $\Delta_b: (b,z)\to W((b,\pi_1\cdot z),(b,\pi_2\cdot z))$ in $F/(c_1\times c_2)$, that is, an object in $(b,z)/W$.
$$\xymatrix{ && Fb\ar[lld]_{F\Delta_b}\ar[rrd]^{Fu}\ar|(.33)\hole|(.66)\hole[ddd]_z\ar|(.5)\hole[lldd]|(.65){\Delta_{Fb}} &&\\
F(b\times b)\ar[d]_{i_{b,b}}\ar[rrrr]^(.67){F((p_1\cdot u)\times (p_2\cdot u))} &&&& F(b_1\times b_2)\ar[d]^{i_{b_1,b_2}}\\
Fb\times Fb\ar[rrrr]^(.68){F(p_1\cdot u)\times F(p_2\cdot u)}\ar[rrd]_{(\pi_1\cdot z)\times(\pi_2\cdot z)\quad} &&&& Fb_1\times Fb_2\ar[lld]^{z_1\times z_2}\\
&& c_1\times c_2 &&\\
}$$
To show that this object $(((b,\pi_1\cdot z),(b,\pi_2\cdot z)),\Delta_b)$ is (weakly) initial, consider any other object $(((b_1,z_1),(b_2,z_2)),u)$ in $(b,z)/W$, as displayed by the right-hand side of the diagram above. As the entire diagram commutes, with the product projections $p_i$ of $b_1\times b_2$ we have the morphism $(p_1\cdot u,p_2\cdot u) $ in $F/c_1\times F/c_2$ making the diagram
$$\xymatrix{&&(b,z)\ar[lld]_{\Delta_b}\ar[rrd]^u && \\
W((b,\pi_1\cdot z),(b,\pi_2\cdot z))\ar[rrrr]^{\quad W(p_1\cdot u,p_2\cdot u)} &&&& W((b_1,z_1),(b_2,z_2))\\
}$$
commute in  $F/(c_1\times c_2)$. This gives the desired morphism in $(b,z)/W$, namely
$$(p_1\cdot u,p_2\cdot u):(((b,\pi_1\cdot z),(b,\pi_2\cdot z)),\Delta_b)\to(((b_1,z_1),(b_2,z_2)),u)\,.$$ 
\end{proof}

Here is the fifth lemma\endnote{We regard this lemma as the most original contribution of Volger's paper. Under its pullback hypotheses, the Lemma actually gives a simplified formula for the evaluation of the profunctor (or distributor) ${G^*\circ F_*:\xymatrix{\CC\ar[r]|\circ & \CD}}$ at $(d,c)$; see \cite{Borceux94}.} that is used for the proof of the main theorem:

\begin{lemma}\label{Lemma5} Let $\CB$ be a small category with pullbacks, and consider a span of functors\endnote{Guided by the intended application of the Lemma, Volger assumes (unnecessarily) that $G$ be an inclusion functor but neglects to mention its needed pullback preservation and, in the proof, he actually assumes $G$ to be an identity functor. The assumptions on pullbacks were corrected in the {\em erratum} \cite{Volger69} (as the only needed correction), and they may actually be weakened considerably: see Lemma \ref{Lemma5WPCnew}.}
$$\xymatrix{c\in \CC & \CB\ar[l]_-{F}\ar[r]^-{G} & \CD\ni d}$$
with $G$ preserving pullbacks, and with specified objects $c$ and $d$ in the locally small categories $\CC$ and $\CD$, respectively. Then, for  the functor
$$P_{c,d}=(\xymatrix{F/c\ar[rr]^{V_{F,c}} && \CB\ar[rr]^G && \CD\ar[rr]^{\CD(d,-)} && \SET \\
})$$
the relation ${\Lambda_{c,d} = \Lambda_{1/P_{c, d}}}$ is an equivalence relation, and\endnote{Volger states the resulting colimit formula only in the main theorem, in the special situation considered there.} so
$$\mathrm{colim}\,P_{c,d}\cong \left(\coprod_{b\in \CB}\CD(d,Gb)\times \CC(Fb,c)\right)/\Lambda_{c,d}\,.$$
\end{lemma}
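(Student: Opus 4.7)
The plan is as follows. By Lemma~\ref{Lemma2}, the colimit of $P_{c,d}$ equals $(\coprod P_{c,d})/\!\sim$, and since an object of $F/c$ is a pair $(b,z)$ with $z:Fb\to c$, the coproduct already takes the stated form $\coprod_{b\in\CB}\CD(d,Gb)\times\CC(Fb,c)$. The whole content of the lemma therefore reduces to showing that the generating relation $R$ is already transitive, so that $R=\sim$; reflexivity is witnessed by identity spans, and symmetry by swapping the two legs.

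For transitivity, I would argue as follows. Suppose $((b_1,z_1),x_1)$ and $((b_3,z_3),x_3)$ are both $R$-related to a common middle term $((b_2,z_2),x_2)$, witnessed by spans $(b_1,z_1)\xleftarrow{f_1}(\bar b_{12},\bar z_{12})\xrightarrow{f_2}(b_2,z_2)$ and $(b_2,z_2)\xleftarrow{g_2}(\bar b_{23},\bar z_{23})\xrightarrow{g_3}(b_3,z_3)$ in $F/c$, together with witness elements $\bar x_{12}\in\CD(d,G\bar b_{12})$ and $\bar x_{23}\in\CD(d,G\bar b_{23})$ satisfying $Gf_i\cdot\bar x_{12}=x_i$ and $Gg_i\cdot\bar x_{23}=x_i$. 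I would form the pullback $P$ of $f_2$ and $g_2$ in $\CB$, with projections $p_1:P\to\bar b_{12}$ and $p_2:P\to\bar b_{23}$. The two candidate structure maps $\bar z_{12}\cdot Fp_1$ and $\bar z_{23}\cdot Fp_2$ for $FP\to c$ coincide, since both equal $z_2\cdot F(f_2p_1)=z_2\cdot F(g_2p_2)$; this yields a well-defined $(P,\bar z)\in F/c$ with $p_1,p_2$ promoted to morphisms in $F/c$. Because $G$ preserves pullbacks and $Gf_2\cdot\bar x_{12}=x_2=Gg_2\cdot\bar x_{23}$, there is a unique $\bar x:d\to GP$ with $Gp_1\cdot\bar x=\bar x_{12}$ and $Gp_2\cdot\bar x=\bar x_{23}$. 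The span $(b_1,z_1)\xleftarrow{f_1p_1}(P,\bar z)\xrightarrow{g_3p_2}(b_3,z_3)$ together with $\bar x$ then witnesses the required transitivity.

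The hard part is precisely the assembly of this new apex: it is where both hypotheses of the lemma intervene. Without pullbacks in $\CB$ one cannot form $P$ and endow it with a coherent $F/c$-structure, and without pullback-preservation by $G$ one cannot glue $\bar x_{12}$ and $\bar x_{23}$ into a single $\bar x$. Once transitivity is secured, the colimit description is a direct restatement of Lemma~\ref{Lemma2} under the identification $\coprod_{(b,z)\in F/c}\CD(d,Gb)=\coprod_{b\in\CB}\CD(d,Gb)\times\CC(Fb,c)$.
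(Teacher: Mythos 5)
Your proof is correct and takes essentially the same route as the paper's: everything reduces to transitivity of $R$, which is obtained by pulling back the two legs into the common middle object, equipping the apex with the induced $F/c$-structure, and using pullback-preservation by $G$ to merge the two witnessing elements of $\CD(d,G(-))$ into a single one over the apex. The identification of $\coprod_{(b,z)}P_{c,d}(b,z)$ with $\coprod_{b}\CD(d,Gb)\times\CC(Fb,c)$ and the appeal to Lemma~\ref{Lemma2} are likewise exactly as in the paper.
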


\begin{proof} 
(We do not reproduce Volger's proof since it now follows from the more general Lemma~\ref{Lemma5WPCnew}.)
\end{proof}

\begin{theorem}\label{VolgerTheorem}
For an algebraic theory $A:\CS^{\op}_r\to\CA$ of presentation rank $r$, the free algebra over a set $X$ is the functor assigning to an object $A^m\in\CA\;(m<r)$ the set
$$\left(\coprod_{n<r}\CA(A^n,A^m)\times X^n\right)/R_m\;,$$
to be naturally extended to the morphisms of $\CA$; here $R_m$ is the equivalence relation on the set of all triples $(n,\omega, x)$ with $n<r$, a (formal $n$-ary and $m$-valued) operation 
$\omega: A^n\to A^m$ in $\CA$, and with $x\in X^n$ (also written as a map $x:n\to X$), defined\endnote{The explicit description has been added here to Volger's formulation of his theorem.} by

\begin{tabular}{ll}
$(n,\omega,x)\,R_m\,(n',\omega',x')\iff\exists$ & $\xymatrix{n\ar[r]^{\varphi} & \overline{n} & n'\ar[l]_{\psi}
}\mbox{ in }\CS_r \mbox{ and }\overline{x}\in X^{\overline{n}}$ such that the diagrams\\
& $\xymatrix{n\ar[r]^{\varphi}\ar[rd]_x &\overline{n}\ar[d]^{\overline{x}} & n'\ar[l]_{\psi}\ar[ld]^{x'} & A^n\ar[rd]_{\omega} & A^{\overline{n}}\ar[l]_{A^{\varphi}}\ar[r]^{A^{\psi}} & A^{n'}\ar[ld]^{\omega'} \\ 
& X & & & A^m & \\
}$ \quad commute.  \\
\end{tabular}
\end{theorem}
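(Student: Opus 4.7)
The plan is to realize the claimed free algebra as (the restriction to $\mathsf{Alg}(\CA)$ of) the left Kan extension $\mathrm{Lan}_A X^{(-)}: \CA \to \SET$, and to use the five lemmata to compute its value at each $A^m$ and to verify product preservation.

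For the evaluation, Lemma~\ref{Lemma1} gives $(\mathrm{Lan}_A X^{(-)})(A^m) \cong \mathrm{colim}(X^{(-)} V_{A, A^m})$. I apply Lemma~\ref{Lemma5} with $\CB = \CS_r^\op$, $\CC = \CA$, $F = A$, $\CD = \SET^\op$, $G = I^\op: \CS_r^\op \to \SET^\op$, $c = A^m$, $d = X$. The required pullbacks in $\CS_r^\op$ exist because $\CS_r$ admits pushouts of size $<r$ (by regularity of $r$), and $I^\op$ preserves them. With these choices $P_{c,d}(n,\omega) = \SET^\op(X, n) \times \CA(A^n, A^m) = X^n \times \CA(A^n, A^m)$, so $P_{c,d}$ agrees with $X^{(-)} V_{A, A^m}$, and Lemma~\ref{Lemma5} produces the set $\bigl(\coprod_n \CA(A^n,A^m) \times X^n\bigr)/R$ of the theorem. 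A direct translation of the span conditions ``$z \cdot Ff = z_1$'' and ``$Gf \cdot x_1 = x$'' through the two variance flips turns them into $\omega \cdot A^\varphi = \omega' \cdot A^\psi$ and $\overline{x} \cdot \varphi = x$, which is Volger's description of $R_m$ verbatim.

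To see that $\mathrm{Lan}_A X^{(-)}$ actually lies in $\mathsf{Alg}(\CA)$, I verify that it preserves products of size $<r$. Every such product in $\CA$ has the form $A^{\sum_i m_i}$ with $\sum_i m_i < r$, and is preserved by $A$; applying Lemma~\ref{Lemma4} in the $I$-indexed form Volger indicates yields a final functor whose restricted colimit equals $\mathrm{colim}\,X^{(-)}(V_{A,A^{m_1}} \otimes V_{A,A^{m_2}} \otimes \cdots)$. Since $X^{(-)}$ sends coproducts in $\CS_r$ to products in $\SET$, this reduces to the external product of the $X^{(-)} V_{A,A^{m_i}}$, and Lemma~\ref{Lemma3} identifies its colimit with the product of the individual colimits\,---\,legitimately, because Lemma~\ref{Lemma5} (already used above) ensures each of those relations is transitive. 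The adjunction $\mathrm{Lan}_A \dashv (-) A$ then restricts, via the equivalence $\mathsf{Alg}(\CS_r^\op) \simeq \SET$, to the desired left adjoint of $U$, with the free algebra on $X$ described by the stated formula on objects and by the functoriality of $\mathrm{Lan}_A$ on morphisms.

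The main subtlety is the variance-tracking in the first step: Lemma~\ref{Lemma5} is phrased for a covariant $G$, while $X^{(-)}$ is really a presheaf on $\CS_r$, so one must consistently pass through $\SET^\op$ when identifying $P_{c,d}$ with $X^{(-)} V_{A, A^m}$ and when translating the span relation into Volger's diagrams. Once that bookkeeping is in place, the remainder is essentially mechanical, using only the five lemmata together with regularity of $r$ and the fact that the inclusion $I:\CS_r\hookrightarrow\SET$ preserves colimits of size $<r$.
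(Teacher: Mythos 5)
Your proposal is correct and follows essentially the same route as the paper: identify the free algebra with $\mathrm{Lan}_A X^{(-)}$, compute its values via Lemmata \ref{Lemma1} and \ref{Lemma5} applied to the span $\CA \xleftarrow{A} \CS_r^{\op} \xrightarrow{I^{\op}} \SET^{\op}$, and establish product preservation by the chain Lemma \ref{Lemma1} -- Lemma \ref{Lemma4} -- product preservation of $X^{(-)}$ -- Lemma \ref{Lemma3} -- Lemma \ref{Lemma1}, with the transitivity hypothesis of Lemma \ref{Lemma3} supplied by Lemma \ref{Lemma5}. The only deviations are cosmetic (minor index bookkeeping in the comma-category morphism condition, and an unneeded appeal to $I$ preserving colimits of size $<r$ where only preservation of pushouts by $I$, i.e.\ of pullbacks by $I^{\op}$, is required).
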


\begin{proof}
Specializing $$\xymatrix{c\in \CC & \CB\ar[l]_{\quad F}\ar[r]^{G\quad} & \CD\ni d}$$ of Lemma \ref{Lemma5} to
$$\xymatrix{A^m\in \CA & \CS_r^{\op}\ar[l]_{\quad A}\ar[r]^{I^{\op}\quad} & \SET^{\op}\ni X}$$
with the inclusion functor $I:\CS_r\to\SET$,
let us first note that, since $\CS_r^{\op}$ has pullbacks preserved by $I^{\op}$, the relation $R_m$ induced by $$P_{A^m,X}:=\SET(-,X)I^{\op}V_{A,A^m}=X^{{\scriptscriptstyle (-)}}V_{A,A^m}$$ on the set
$\coprod_{n<r}\CA(A^n,A^m)\times X^n$ via Lemma \ref{Lemma2} is, by Lemma \ref{Lemma5}, in fact an equivalence relation. 
This gives the desired formula
$$( \mathrm{Lan}_AX^{{\scriptscriptstyle (-)}})(A^m)\cong \left( \coprod_{n<r}\CA(A^n,A^m)\times X^n\right)/R_m\;,$$
as well as the description of $R_m$ as stated in the Theorem.

For some $k<r$ one now considers a family of objects $A^i\in \CA\; (i<k)$ and, according to the introductory remarks, one must show that the functor $\mathrm{Lan}_AX^{{\scriptscriptstyle (-)}}:\CA\to\SET$ preserves their product $\prod_{i<k}A^i$ in $\CA$. Of course, each relation $R_i$ induced by $X^{{\scriptscriptstyle (-)}}V_{A,A^i}$ on the set
$\coprod_{n<r}\CA(A^n,A^i)\times X^n$ is, like $R_m$, an equivalence relation. This allows one to apply Lemma \ref{Lemma3} for every $i<k$ in the  penultimate step of the following calculation, which validates the claimed product preservation:
\begin{align*}
(\mathrm{Lan}_AX^{{\scriptscriptstyle (-)}})\left(\prod_{i<k}A^i \right) &\cong \mathrm{colim}(X^{{\scriptscriptstyle (-)}}V_{A,\prod_{i<k}A^i}) & \text{(Lemma \ref{Lemma1})}\\
& \cong \mathrm{colim}\left(X^{{\scriptscriptstyle (-)}}\bigotimes_{i<k}V_{A,A^i}\right) &\text{(Lemma \ref{Lemma4}, Footnote 12)}\\
& \cong \mathrm{colim}\left(\bigotimes_{i<k}X^{{\scriptscriptstyle (-)}}V_{A,A^i}\right) &(X^{{\scriptscriptstyle (-)}}\text{ preserves products})\\
&\cong\prod_{i<k}\mathrm{colim}(X^{{\scriptscriptstyle (-)}}V_{A,A^i})& \text{(Lemma \ref{Lemma3}, Footnote 9)}\\
&\cong\prod_{i<k} (\mathrm{Lan}_AX^{{\scriptscriptstyle (-)}})(A^i)\;.& \text{(Lemma \ref{Lemma1})}
 \end{align*}
\end{proof}
Literature: \cite{Lawvere63, Lawvere63a, Lawvere65, Linton66, Mitchell65, Ulmer66}.

\section{Some observations and a brief look at some subsequent work}
\label{SecComparison}

Since free algebras determine a left adjoint, it necessarily has a presentation as an instance of  the Adjoint Functor Theorems as done, for example,  in \cite[Section {V.6}]{ML71} but, naturally, the pioneers looked for more informative constructions.

\subsection{Comparison with Lawvere's thesis}
As Lawvere showed in his thesis (\cite{Lawvere63}, see p. 74 of \cite{Lawvere04}), the hom-functor $\CA(A^1,-):\CA\to\SET$ serves as the free $\CA$-algebra $\mathrm{Fr}(1)$ on one generator (with respect to an algebraic theory $\CA$ of presentation rank $r=\aleph_0$, but likewise also when $r>\aleph_0$). Indeed, in effect establishing the Yoneda Lemma, he noticed that, for any $Y\in\mathsf{Alg}(\CA)$ and the forgetful $U:\mathsf{Alg}(\CA)\to\SET$, one has the natural isomorphisms
$$\mathsf{Alg}(\CA)(\mathrm{Fr}(1),Y)=\SET^{\CA}(\CA(A^1,-),Y)\cong Y(A^1)= UY\cong\SET(1,UY)\;,$$
which then served Linton \cite{Linton66} as the starting point for his study of infinitary algebras. Every $n$-ary formal operation $\omega:A^n\to A^1$ in $\CA$ induces the $n$-ary operation
$$(U\mathrm{Fr}(1))^n=\CA(A^1,A^1)^n\cong\xymatrix{\CA(A^1,A^n)\ar[rr]^{\CA(A^1,\,\omega)} && \CA(A^1,A^1)}= U\mathrm{Fr}(1)\;,\;\theta\longmapsto \omega\cdot\theta.$$
on the set $U\mathrm{Fr}(1)$. 
As Lawvere mentions further, the free $\CA$-algebra $\mathrm{Fr}(X)$ of $X$-many generators for any set $X$ is then given as the copower $X\bullet\mathrm{Fr}(1)$ formed in $\mathsf{Alg}(\CA)$.

How does Volger's presentation of free $\CA$-algebras as in Theorem \ref{VolgerTheorem} differ from the Lawvere-Linton presentation? For $X=1$, Theorem \ref{VolgerTheorem} presents
$U\mathrm{Fr}(1)$ as the set 
$(\bigcup_{n<r}\CA(A^n, A^1))/\!\sim$ where 
$$(n,\omega)\sim(n',\omega')\iff \exists \xymatrix{(n\ar[r]^{\varphi} & \overline{n} & n')\ar[l]_{\psi}
}\text{in }\CS_r\text{ such that }\xymatrix{
A^n\ar[rd]_\omega & A^{\overline n}\ar[l]_{A^{\varphi}}\ar[r]^{A^\psi}& A^{n'}\ar[ld]^{\omega'} \\
&A^1&
}\text{commutes.}$$
Indeed, the natural insertion of $\CA(A^1,A^1)$ into $(\bigcup_{n<r}\CA(A^n, A^1))/\!\sim$ has an inverse which assigns to the $\sim$-equivalence class $[n,\omega]$ of $(n,\omega)$ the formal unary operation $\xymatrix{A^1\ar[r]^{\Delta_n} & A^n\ar[r]^{\omega} & A^1}$.

For arbitrary sets $X$, rather than using copowers in $\mathsf{Alg}(\CA)$ or a left adjoint of $\mathsf{Alg}(\CA)\hookrightarrow\SET^{\CA}$, Volger presents $U\mathrm{Fr}(X)$ in direct generalization of his presentation of $U\mathrm{Fr}(1)$, as laid out in Theorem \ref{VolgerTheorem}. Using $\sim$ again to denote the relation $R_1$ of Theorem \ref{VolgerTheorem} and brackets for its equivalence classes, one obtains that every formal operation $\omega:A^m\to A^1$ induces the $m$-ary operation on the set  $(\bigcup_{n<r}\CA(A^n,A^1)\times X^n)/\!\sim$, defined by
$$([n_i,\omega_i,x_i])_{i<m}\quad\longmapsto\quad \left[\; \sum_{i<m}n_i,\;\omega\cdot\prod_{i<m}\omega_i,\;\mu_X((x_i)_{i<m})\; \right] $$
where $\mu_X$ concatenates the family $(x_i)_{i<m}\in\prod_{i<m}X^{n_i}$, {\em i.e.}, in the finitary case  $\mu_X$ is the multiplication of the list monad on $\SET$ at the set $X$.

We also note that, with reference to Linton \cite{Linton66} and Volger \cite{Volger68}, in his rarely cited paper \cite{Schumacher70} Schumacher showed shortly afterwards that the free $\CA$-algebras generated by an arbitrary set are $r$-directed colimits of free $\CA$-algebras with less than $r$-many generators. His paper may therefore be seen as one of the precursors to the important monograph \cite{GU71} by Gabriel and Ulmer on locally presentable categories.

\subsection{Comparison with the Howlett-Schumacher paper}
Howlett and Schumacher in \cite{HS72} presented an important modification of Volger's result, a strengthening of  which we record below as Theorem \ref{HSTheorem}. First we note that, with their arguments, one obtains the following stronger version of Lemma \ref{Lemma4}. 

\begin{lemma}\label{HSLemma4}
Let the small category $\CB$ have products of families of size $<r$, and let
$F:\CB\to\CC$ be a functor\endnote{Unlike in \cite{HS72}, no preservation of products by $F$ is assumed.}, $\CC$ locally small.
 Then for all families of objects $c_i$ in $\CC\;(\;i\leq m<r)$ having a product in $\CC$, there is a final functor $W$ making the diagram
$$\xymatrix{\prod_iF/c_i\ar[r]^W\ar[d]_{\prod_iV_{F,c_i}} & F/\prod_ic_i\ar[d]^{V_{F,\prod_ic_i}}\\
\CB^m\ar[r]^{\prod_{i\leq m}}  & \CB\\
}$$
commute (the diagonal of which is $\bigotimes_iV_{F,c_i}$ in Volger's notation). Therefore, for any functor $X:\CB\to\CE$ one has
$ \mathrm{colim}(XV_{F,\prod_ic_i})\cong\mathrm{colim}\,X(\bigotimes_iV_{F,c_i}),$
existence of these colimits in $\CE$ granted.
\end{lemma}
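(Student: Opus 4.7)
My plan is to adapt the proof of Lemma \ref{Lemma4} by replacing the canonical isomorphism $i_{b_1,b_2}\colon F(b_1\times b_2)\to Fb_1\times Fb_2$ (which required product preservation by $F$) with the morphism $F(\prod_{i\leq m}b_i)\to\prod_{i\leq m}c_i$ supplied by the universal property of $\prod_{i\leq m}c_i$ in $\CC$. Explicitly, for an object $((b_i,z_i))_{i\leq m}$ of $\prod_{i\leq m}F/c_i$, with projections $p_j\colon\prod_{i\leq m}b_i\to b_j$ in $\CB$, I would set
$$W\bigl((b_i,z_i)_{i\leq m}\bigr)\;:=\;\Bigl(\,\prod_{i\leq m}b_i,\;\bigl\langle z_j\cdot Fp_j\bigr\rangle_{j\leq m}\,\Bigr),$$
where $\bigl\langle z_j\cdot Fp_j\bigr\rangle_j$ is the unique arrow $F(\prod_{i\leq m}b_i)\to\prod_{i\leq m}c_i$ induced by the family $\{z_j\cdot Fp_j\}_j$, extended canonically to morphisms. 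The triangle in the statement then commutes by construction.

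Next, I would establish finality of $W$ following the pattern of Lemma \ref{Lemma4}, by exhibiting for each $(b,z)\in F/\prod_{i\leq m}c_i$ a weakly initial object of $(b,z)/W$. Writing $\pi_j$ for the $\CC$-projections and $\Delta_b\colon b\to\prod_{i\leq m}b$ for the diagonal in $\CB$, the identity $p_j\cdot\Delta_b=\mathrm{id}_b$ immediately yields $\bigl\langle\pi_j\cdot z\cdot Fp_j\bigr\rangle_j\cdot F\Delta_b=z$, so $\Delta_b$ becomes a morphism $(b,z)\to W((b,\pi_j\cdot z)_{j\leq m})$ in $F/\prod_{i\leq m}c_i$, i.e.\ an object of $(b,z)/W$. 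For weak initiality, given any other $(((b_i,z_i))_{i\leq m},u)$ with $u\colon b\to\prod_{i\leq m}b_i$ satisfying $\bigl\langle z_j\cdot Fp_j\bigr\rangle_j\cdot Fu=z$, I would show via a short diagram chase that each $p_i\cdot u$ is a morphism $(b,\pi_i\cdot z)\to(b_i,z_i)$ in $F/c_i$, and that the tuple $(p_i\cdot u)_{i\leq m}$ in $\prod_{i\leq m}F/c_i$ satisfies $W((p_i\cdot u)_{i\leq m})\cdot\Delta_b=u$; this furnishes the required morphism in $(b,z)/W$ from the weakly initial object to $(((b_i,z_i))_i,u)$.

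The colimit isomorphism then follows formally, since final functors are invariant under colimits of diagrams that factor through them, granted existence in $\CE$. The only mildly delicate point is to confirm that the two generalisations relative to Lemma \ref{Lemma4} --- passing from binary to $<r$-ary products in $\CB$, and dropping product preservation of $F$ --- do not interact badly. I expect they are in fact orthogonal: the diagonal-and-projection bookkeeping of Lemma \ref{Lemma4} transfers verbatim to $m$-indexed families, while the universal property of $\prod_{i\leq m}c_i$ in $\CC$ substitutes cleanly for the missing isomorphism $i_{b_1,b_2}$ without any new hypothesis on $F$.
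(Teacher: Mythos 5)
Your construction of $W$ is exactly the paper's: the component $\langle z_j\cdot Fp_j\rangle_j$ obtained from the universal property of $\prod_i c_i$ in $\CC$ is precisely how the paper replaces the missing isomorphism $i_{b_1,b_2}$, and all your verifications (well-definedness on morphisms, the commuting triangle, the passage from binary to $<r$-ary families) go through as you describe. The only divergence is in how finality is certified. You generalize the hands-on argument of Lemma \ref{Lemma4}, exhibiting $\bigl(((b,\pi_j\cdot z))_{j},\Delta_b\bigr)$ as a weakly initial object of $(b,z)/W$; the paper instead observes that the assignment $(b,z)\mapsto((b,\pi_i\cdot z))_i$ is functorial and gives a (right-inverse) left adjoint $L$ to $W$, and then invokes the general fact that right adjoints are final. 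These are two faces of the same computation: your verification that $(p_i\cdot u)_i$ is the \emph{unique} morphism out of your weakly initial object compatible with $u$ (which it is --- uniqueness follows since any such $(g_i)_i$ must satisfy $g_i=p_i\cdot W((g_i)_i)\cdot\Delta_b=p_i\cdot u$) is exactly the universal property of the unit $\Delta_b$ of the adjunction $L\dashv W$. The paper's route is slicker and explains \emph{why} the comma categories are connected; yours is more elementary and self-contained, needing only the weak form of initiality. Both are correct.
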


\begin{proof}
The object part of the functor $W$ of Lemma \ref{Lemma4}, generalized from binary to products of less than $r$ factors, may be described without the use of any products in $\CC$ other than the given $\prod_ic_i$, if we let $(b_i,z_i)_i\in\prod_iF/c_i$ be mapped to $(\prod_ib_i,z)_i\in F/(\prod_ic_i)$, where $z:F(\prod_ib)\to \prod_ic_i$ in $\CC$ is induced by the morphisms $z_i\cdot Fp_i$, with product projections $p_i$ in $\CB$. It easy to see that that $W$ has a (right-inverse) left adjoint $L$, described by 
$$L: F/\prod_ic_i\longrightarrow\prod_iF/c_i,\quad(b,z)\longmapsto (\pi_i\cdot z)_i,$$
with product projections $\pi_i$ in $\CB$. As a right adjoint, $W$ is final\endnote{Howlett and Schumacher \cite{HS72} credit \cite{Berthiaume69} for the implication (right adjoint $\Longrightarrow$ final).}.
\end{proof}

In conjunction with Lemma \ref{Lemma3} we can now generalize Theorem \ref{VolgerTheorem} in the following form:

\begin{theorem}\label{VolgerTheorem+}
Let $F:\CB\to\CC$ and $G:\CB\to \CD$ be functors, with $\CB$ small and $\CC,\CD$ locally small. If $G$ satisfies {\em (WPC)}, and if $\CB$ has products of size $<r$ that are preserved by 
$G$, then also ${\mathrm{Lan}}_FX_d:\CC\to\SET$ preserves all such products in $\CC$, where $X_d=\CD(d,G(-))$ with any $d\in\CD$.
\end{theorem}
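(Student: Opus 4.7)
The plan is to reprise the five-step calculation from the proof of Theorem \ref{VolgerTheorem}, now feeding in the strengthened Lemmas \ref{HSLemma4} and \ref{Lemma5WPC} in place of Lemmas \ref{Lemma4} and \ref{Lemma5}. I would fix a family $(c_i)_{i<m}$ with $m<r$ of objects in $\CC$ whose product $\prod_{i<m} c_i$ is assumed to exist in $\CC$, and aim to establish the natural isomorphism $(\mathrm{Lan}_F X_d)(\prod_{i<m} c_i) \cong \prod_{i<m} (\mathrm{Lan}_F X_d)(c_i)$.

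The chain of isomorphisms would read
\begin{align*}
(\mathrm{Lan}_F X_d)\left(\prod_{i<m} c_i\right)
  &\cong \mathrm{colim}\bigl(X_d\, V_{F,\prod_{i<m} c_i}\bigr) & \text{(Lemma \ref{Lemma1})}\\
  &\cong \mathrm{colim}\left(X_d \bigotimes_{i<m} V_{F,c_i}\right) & \text{(Lemma \ref{HSLemma4})}\\
  &\cong \mathrm{colim}\left(\bigotimes_{i<m} X_d\, V_{F,c_i}\right) & \text{($X_d$ preserves $<r$-products)}\\
  &\cong \prod_{i<m} \mathrm{colim}\bigl(X_d\, V_{F,c_i}\bigr) & \text{(Lemma \ref{Lemma3})}\\
  &\cong \prod_{i<m} (\mathrm{Lan}_F X_d)(c_i). & \text{(Lemma \ref{Lemma1})}
\end{align*}
The second step legitimately invokes Lemma \ref{HSLemma4} because $\CB$ has products of size $<r$ and, crucially, no preservation hypothesis on $F$ is required. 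The third step uses that $X_d = \CD(d,G(-))$ is the composite of $G$ (which preserves products of size $<r$ by hypothesis) with the representable $\CD(d,-)$ (which preserves all limits in $\SET$); product-preservation by $X_d$ is therefore immediate. The fourth step requires the size-$<r$ generalization of Lemma \ref{Lemma3}, whose applicability rests on each relation induced by $X_d\, V_{F,c_i} = P_{c_i,d}$ being an equivalence relation; since $G$ satisfies (WPC), Lemma \ref{Lemma5WPC} delivers exactly this for every $c_i$.

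I expect no genuine obstacle beyond careful book-keeping: the theorem is essentially a repackaging of Volger's original argument with the two strengthened lemmas slotted in, the former removing Volger's product-preservation hypothesis on $F$ via a right-adjoint argument for $W$, and the latter replacing pullback preservation by (WPC). The only point that deserves verification is the extension of Lemma \ref{Lemma3} from binary to size-$<r$ products, but this is immediate from the same transitivity-of-equivalence-relations argument used for the binary case (and is explicitly anticipated by the footnote to that lemma).
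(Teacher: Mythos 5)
Your proposal is correct and follows essentially the same route as the paper's own proof: the identical five-step chain of isomorphisms, with Lemma \ref{HSLemma4} supplying the final functor without product preservation by $F$, and Lemma \ref{Lemma5WPC} (via (WPC)) guaranteeing that each induced relation is an equivalence relation so that the $<r$-fold version of Lemma \ref{Lemma3} applies. Your additional remarks justifying product preservation by $X_d$ and the extension of Lemma \ref{Lemma3} beyond the binary case are exactly the book-keeping the paper leaves implicit.
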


\begin{proof}
In the setting of Lemma \ref{HSLemma4} one just adjusts the calculation in the proof of Theorem \ref{VolgerTheorem}:
\begin{align*}
(\mathrm{Lan}_FX_d)\left(\prod_{i<k}c_i\right) &\cong \mathrm{colim}(X_dV_{F,\prod_{i<k}c_i}) & \text{(Lemma \ref{Lemma1})}\\
& \cong \mathrm{colim}\left(X_d\bigotimes_{i<k}V_{F,c_i}\right) &\text{(Lemma \ref{HSLemma4})}\\
& \cong \mathrm{colim}\left(\bigotimes_{i<k}X_dV_{F,c_i}\right) &(X_d\text{ preserves products})\\
&\cong\prod_{i<k}\mathrm{colim}(X_dV_{F,c_i})& \text{(Lemma \ref{Lemma5WPCnew} and  \ref{Lemma3})}\\
&\cong\prod_{i<k} (\mathrm{Lan}_AX_d)(c_i)\;.& \text{(Lemma \ref{Lemma1})}
 \end{align*}
\end{proof}






Howlett and Schumacher \cite{HS72} generalize Volger's Theorem \ref{VolgerTheorem} insofar as algebras no longer need to be $\SET$-based, at the expense of the constraint that the theory must be finitary, {\em i.e.} $r\leq\aleph_0$. With this restriction they replace Lemma \ref{Lemma3} by: 

\begin{lemma}\label{HSLemma3}
Let $\CE$ be a category with finite products and small colimits, such that ${C\times(-):\CE\to\CE}$ preserves colimits for every $C\in\CE$. Then, 
for any functors $P_i:\CB_i\to \CE$ of small categories $\CB_i,\,i=1,2 $, and with
$P_1\otimes P_2=( \xymatrix{\CB_1\times\CB_2\ar[rr]^{P_1\times P_2} &&\CE\times\CE\ar[r]^{\quad\times}&\CE})$
as in Lemma {\em\ref{Lemma3}},  the canonical map 
$$\mathrm{colim}( P_1\otimes P_2)\longrightarrow \mathrm{colim}\,P_1\times\mathrm{colim}\,P_2$$
is an isomorphism; and likewise for any finite number of factors.
\end{lemma}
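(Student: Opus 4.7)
The plan is to prove the binary case by reducing the colimit over the product category $\CB_1\times\CB_2$ to an iterated colimit and then applying the distributivity hypothesis twice; the case of $n$ factors will follow by an obvious induction on $n$.

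First I would invoke the Fubini-type theorem for colimits: for any functor $P_1\otimes P_2:\CB_1\times\CB_2\to\CE$ of small categories with values in a small-cocomplete category, there is a canonical isomorphism
\[
\mathrm{colim}_{(b_1,b_2)\in\CB_1\times\CB_2}(P_1b_1\times P_2b_2)\;\cong\;\mathrm{colim}_{b_1\in\CB_1}\,\mathrm{colim}_{b_2\in\CB_2}(P_1b_1\times P_2b_2),
\]
obtained by viewing the left-hand colimit as a left Kan extension along $\CB_1\times\CB_2\to\mathbf{1}$ and factoring through $\CB_1\to\mathbf{1}$.

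Next I would apply the hypothesis that $C\times(-):\CE\to\CE$ preserves colimits for every $C\in\CE$. Fixing $b_1$ and setting $C=P_1b_1$, the inner colimit becomes
\[
\mathrm{colim}_{b_2\in\CB_2}(P_1b_1\times P_2b_2)\;\cong\;P_1b_1\times\mathrm{colim}_{b_2\in\CB_2}P_2b_2\;=\;P_1b_1\times\mathrm{colim}\,P_2.
\]
Using symmetry of the finite product (or equivalently the fact that the hypothesis equally gives colimit-preservation of $(-)\times D$ for every $D\in\CE$), the outer colimit then yields
\[
\mathrm{colim}_{b_1\in\CB_1}\bigl(P_1b_1\times\mathrm{colim}\,P_2\bigr)\;\cong\;\mathrm{colim}\,P_1\times\mathrm{colim}\,P_2.
\]
A routine check confirms that the composite isomorphism so constructed coincides with the canonical comparison map induced by the universal property.

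The induction step for finitely many factors is immediate: assuming the result for $n-1$ factors, write $P_1\otimes\cdots\otimes P_n\cong(P_1\otimes\cdots\otimes P_{n-1})\otimes P_n$ (as functors on $(\CB_1\times\cdots\times\CB_{n-1})\times\CB_n$) and apply the binary case. The only genuine content is the binary step, and the only real subtlety there is the Fubini reduction; the rest is just two applications of the distributivity assumption. I do not foresee any serious obstacle, provided one is willing to take Fubini for colimits as a standard fact (which is available in $\CE$ since $\CE$ is small-cocomplete).
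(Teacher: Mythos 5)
Your proposal is correct and follows essentially the same route as the paper's proof, which is exactly the chain $\mathrm{colim}(P_1\otimes P_2)\cong\mathrm{colim}_{b_1}(\mathrm{colim}_{b_2}(P_1b_1\times P_2b_2))\cong\mathrm{colim}_{b_1}(P_1b_1\times C_2)\cong C_1\times C_2$: Fubini, then two applications of distributivity (the second via symmetry of the product). Your additional remarks on identifying the composite with the canonical comparison map and on the induction for finitely many factors are fine but add nothing beyond what the paper leaves implicit.
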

\begin{proof}
With $C_i=\mathrm{colim} P_i$ one has $$\mathrm{colim}(P_1\otimes P_2)\cong\mathrm{colim}_{b_1}(\mathrm{colim}_{b_2}(P_1b_1\times P_2b_2)) \cong  \mathrm{colim}_{b_1}(P_1b_1\times C_2)\cong C_1\times C_2.  $$
\end{proof}

Trading Lemma \ref{Lemma3} for Lemma \ref{HSLemma3}, with the same calculation as in the proof of Theorem \ref{VolgerTheorem+} one obtains the following strengthening 
  of Howlett's and Schumacher's Theorem \cite{HS72}:

\begin{theorem}\label{HSTheorem} 
Let $\CE$ be as in Lemma {\em \ref{HSLemma3}}, and let $F:\CB\to\CC$ be any functor, with the small category $\CB$ having finite products that are preserved 
 by the functor $X:\CB\to\CE$. Then also $\mathrm{Lan}_FX$ preserves finite products.
\end{theorem}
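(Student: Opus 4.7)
The plan is to replay verbatim the chain of isomorphisms from the proof of Theorem \ref{VolgerTheorem+}, with $\CE$ in place of $\SET$ and with Lemma \ref{HSLemma3} taking over the role that Lemmas \ref{Lemma5WPC} and \ref{Lemma3} played together. The inputs line up: $\CB$ is small so each $F/c_i$ and each finite product $\prod_{i<k} F/c_i$ is small; $\CC$ is locally small and $\CE$ is small-cocomplete, so the pointwise formula of Lemma \ref{Lemma1} applies; and the ambient finite-product structure of $\CB$ together with the preservation hypothesis on $X$ supply the data needed in the middle of the argument. No hypothesis on $F$ beyond being a functor is required, and no additional exactness of $\CE$ beyond that spelled out in Lemma \ref{HSLemma3}.

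Fix a finite family $(c_i)_{i<k}$ in $\CC$ whose product exists in $\CC$. Starting from $(\mathrm{Lan}_F X)(\prod_{i<k} c_i)$, I would apply Lemma \ref{Lemma1} to rewrite it as $\mathrm{colim}(X V_{F,\prod_{i<k} c_i})$, then invoke Lemma \ref{HSLemma4} (whose statement deliberately removes any product-preservation hypothesis on $F$) to replace $V_{F,\prod_i c_i}$ by $\bigotimes_{i<k} V_{F,c_i}$ along a final functor $W$, at the cost of an isomorphism of colimits. Preservation of finite products by $X$ then gives a canonical natural isomorphism $X\bigotimes_{i<k} V_{F,c_i} \cong \bigotimes_{i<k} X V_{F,c_i}$ of functors $\prod_i F/c_i \to \CE$. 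Now Lemma \ref{HSLemma3} — applied inductively or, equivalently, in its \emph{any finite number of factors} form — converts the colimit of this tensor product into $\prod_{i<k} \mathrm{colim}(X V_{F,c_i})$, and a last use of Lemma \ref{Lemma1} rewrites each factor as $(\mathrm{Lan}_F X)(c_i)$.

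The only mildly delicate point is to check that the composite isomorphism one obtains agrees, up to canonical identifications, with the comparison morphism $(\mathrm{Lan}_F X)(\prod_{i<k} c_i)\to\prod_{i<k}(\mathrm{Lan}_F X)(c_i)$ induced by the product projections in $\CC$. I expect this to be essentially automatic from the naturality of Lemma \ref{Lemma1} in its $\CC$-argument and from the explicit compatibility of the functor $W$ of Lemma \ref{HSLemma4} with the product projections (indeed, the left adjoint $L$ of $W$ exhibited in the proof of Lemma \ref{HSLemma4} reads off exactly those projections), the empty case $k=0$ being the observation that $1_{\CB}$ is final in $\CB$ so that $\mathrm{colim}\,X \cong X(1_{\CB}) \cong 1_{\CE}$ by product-preservation of $X$. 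The conceptual content that distinguishes this proof from the one for Theorem \ref{VolgerTheorem+} is concentrated entirely in Lemma \ref{HSLemma3}: the hypothesis that $C\times(-)$ preserves colimits makes the transitivity-of-relations obstruction that forced Volger to introduce Lemma \ref{Lemma5} (and us to sharpen it to Lemma \ref{Lemma5WPC}) simply vanish, so no pullback-style condition on $F$ or on $\CB$ enters the statement.
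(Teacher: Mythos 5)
Your proposal is correct and is essentially the paper's own proof: the paper proves Theorem \ref{HSTheorem} by ``trading Lemma \ref{Lemma3} for Lemma \ref{HSLemma3}'' in the very chain of isomorphisms from Theorem \ref{VolgerTheorem+}, i.e.\ Lemma \ref{Lemma1}, then Lemma \ref{HSLemma4}, then product preservation by $X$, then Lemma \ref{HSLemma3}, then Lemma \ref{Lemma1} again. Your added remarks on the compatibility with the canonical comparison morphism and on the nullary case agree with the paper's footnotes and require no change.
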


\begin{corollary}
For a morphism $J:\CB\to\CA$ of algebraic theories $A: \CS_{\aleph_0}^{\op}\to \CA$ and $B:\CS_{\aleph_0}^{\op}\to\CB$ (so that $A=JB$), the left adjoint  of the induced algebraic functor $\mathsf{Alg}(\CA)\to\mathsf{Alg}(\CB)$ assigns to a $\CB$-algebra $Y$ the $\CA$-algebra $\mathrm{Lan}_JY$.
\end{corollary}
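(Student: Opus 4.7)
The argument is a direct application of Theorem~\ref{HSTheorem}, followed by restriction of the Kan-extension adjunction from Lemma~\ref{Lemma1} to the full subcategories of product-preserving functors. I expect no serious obstacle: the content is already packed into Theorem~\ref{HSTheorem}, and what remains is routine bookkeeping to verify the hypotheses and to ensure that the adjunction restricts.

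To invoke Theorem~\ref{HSTheorem} with $\CE=\SET$, $F=J$, and $X=Y$, three things must be checked. First, $\SET$ satisfies the hypotheses of Lemma~\ref{HSLemma3}: it is cocomplete and, for every set $C$, the functor $C\times(-)$ is a left adjoint (with right adjoint $(-)^C$), hence preserves colimits. Second, the small category $\CB$ has finite products, inherited from the (finite) coproducts in $\CS_{\aleph_0}$ via the bijective-on-objects and product-preserving functor $B$. Third, $J$ preserves finite products: every object of $\CB$ is of the form $B^n$, and for a finite family $(B^{n_i})_{i<k}$ with product $B^{\sum_i n_i}$ in $\CB$, the identity $A=JB$ together with product-preservation by $A$ forces $J$ to send the $\CB$-product projections to the $\CA$-product projections.

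With these verifications in place, Theorem~\ref{HSTheorem} yields that $\mathrm{Lan}_J Y:\CA\to\SET$ preserves finite products whenever $Y:\CB\to\SET$ does; equivalently, $\mathrm{Lan}_J Y\in\mathsf{Alg}(\CA)$. Symmetrically, the product-preservation of $J$ ensures that precomposition $(-)J:\SET^\CA\to\SET^\CB$ restricts to a functor $\mathsf{Alg}(\CA)\to\mathsf{Alg}(\CB)$, which is the algebraic functor in question. Since both $\mathsf{Alg}(\CA)\hookrightarrow\SET^\CA$ and $\mathsf{Alg}(\CB)\hookrightarrow\SET^\CB$ are full subcategories, the natural bijection of Lemma~\ref{Lemma1} restricts verbatim to
$$\mathsf{Alg}(\CA)(\mathrm{Lan}_J Y,X)\;\cong\;\mathsf{Alg}(\CB)(Y,XJ),$$
natural in $Y\in\mathsf{Alg}(\CB)$ and $X\in\mathsf{Alg}(\CA)$, which is the claimed adjunction.
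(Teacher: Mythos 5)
Your proposal is correct and is exactly the argument the paper intends (the corollary is stated without proof as an immediate consequence of Theorem~\ref{HSTheorem}): apply that theorem with $\CE=\SET$, $F=J$, $X=Y$ to see that $\mathrm{Lan}_JY$ is again product-preserving, and restrict the adjunction $\mathrm{Lan}_J\dashv(-)J$ of Lemma~\ref{Lemma1} to the full subcategories of algebras. Your additional checks — that $\SET$ meets the hypotheses of Lemma~\ref{HSLemma3} and that $A=JB$ forces $J$ to preserve finite products, so that the algebraic functor $(-)J$ is well-defined — are the right routine verifications.
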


\begin{remark}
Theorem \ref{HSTheorem} allows us to produce $\CA$-algebras from $\CB$-algebras even for arbitrary functors $J:\CB\to \CA$, no commutation with $A$ and $B$ or product preservation required. For example, for any finitary algebraic theory $\CA$ we have the functor 
$$J:\CA\longrightarrow\CA,\quad (\omega:A^n\to A^k)\longmapsto( \omega\times 1_{A^1}:A^{n+1}=A^n\times A^1\to A^{k+1}=A^k\times A^1),$$
which fails to preserve products, but still gives for every $\CA$-algebra $X$ the $\CA$-algebra ${X_+:=\mathrm{Lan}_J X}$. In case of the trivial theory $\CA=\CS_{\aleph_0}^{\op}$, a quick computation (for which we thank Francisco Marmolejo) shows that, for any set $X$, one has $X_+\cong X+1$.
\end{remark}

\subsection{Comparison with the Borceux-Day paper}
In the context of categories enriched in a symmentric monoidal-closed category $\CV$, with their Theorem 1.5 Borceux and Day \cite{BorceuxDay77} express the finite-product preservation by left Kan extensions equivalently as the compatibility of coends with finite products. More precisely, they show that, given $\CV$-categories $\CB$ and $\CD$ with finite $\CV$-products, $\CB$ small and $\CD$ with small $\CV$-colimits,  the following conditions are equivalent:
\begin{itemize}
\item[(i)]  For all $\CV$-functors $F:\CB\to\CC$ and $X:\CB\to\CD$, if $X$ preserves finite $\CV$-products, so does ${\mathrm{Lan}}_FX:\CC\to\CD$.
\item[(ii)] For all $\CV$-functors $H_1, H_2:\CB^{\op}\to\CV$ and $X:\CB\to\CD$, if $X$ preserves finite $\CV$-products, then the canonical morphism
$$ \int^{(b_1,b_2)}(H_1b_1\times H_2b_2)\otimes X(b_1\times b_2)\longrightarrow \left(\int^{b_1} H_1b_1\otimes Xb_1\right)\times \left(\int^{b_2}H_2b_2\otimes Xb_2\right)$$
is an isomorphism.
\end{itemize}
In their Example 3.1, Borceux and Day \cite{BorceuxDay77} show that, for $\CV$ {\em cartesian} closed, $\CD:=\CV$ satisfies these equivalent conditions. In fact, their short supporting calculation entails a Fubini-type analogue of the proof of Lemma \ref{HSLemma3}:
\begin{align*}
 \int^{(b_1,b_2)}(H_1b_1\times H_2b_2)\otimes X(b_1\times b_2) &\cong  \int^{b_1}\!\!\!\!\int^{b_2}(H_1b_1\times H_2b_2)\otimes (Xb_1\times Xb_2) \\
 &\cong\int^{b_1}H_1b_1\otimes\int^{b_2}H_2b_2\otimes(Xb_1\times Xb_2) \\
 & \cong\int^{b_1}H_1b_1\otimes\left(Xb_1\times\int^{b_2}H_2b_2\otimes Xb_2\right)\\
&\cong\left(\int^{b_1} H_1b_1\otimes Xb_1\right)\times \left(\int^{b_2}H_2b_2\otimes Xb_2\right)\,.
  \end{align*}
  In particular, Theorem \ref{HSTheorem} follows. 
  
  For $\CV$ just monoidal closed (not necessarily cartesian closed) with finite products and small co-\\limits, Borceux and Day \cite{BorceuxDay80} take the satisfaction of property (ii) as part of the defining conditions for $\CV$ to be a {\em $\pi$-category} and then formulate property (i) with $\CD=\CV$ as their Proposition~{2.2.1}. This is the starting point for their  study of Lawvere-style of $\CV$-enriched algebraic theories, categories and functors, which includes the existence proof of free algebras and, in fact, of left adjoints to algebraic functors in the enriched setting.

$$ $$
\begin{tabular}{lcl}
Mat\'{i}as Menni &\qquad& Walter Tholen\\
Conicet and Centro de Matem\'atica (CMaLP) &\qquad& Department of Mathematics and Statistics\\
Universidad Nacional de La Plata &\qquad& York University \\
La Plata &\qquad& Toronto ON \\
Argentina &\qquad& Canada\\
matias.menni@gmail.com &\qquad& tholen@yorku.ca \\
\end{tabular}



 { \parindent 0pt
     \parskip 2ex
    \def\enotesize{\normalsize}
     \theendnotes   }

\end{document}